\newtheorem{theorem}{Theorem}[section]
\newtheorem{lemma}{Lemma}[section]
\numberwithin{equation}{section}
\theoremstyle{definition}
\theoremstyle{remark}
\begin{document}
\title{On weighted Hardy inequalities for non-increasing sequences}
\author{Peng Gao}
\address{Department of Mathematics, School of Mathematics and System Sciences, Beijing University of Aeronautics and Astronautics, P. R. China}
\email{penggao@buaa.edu.cn}
\subjclass[2000]{Primary 47B37} \keywords{weighted Hardy inequalities}
\thanks{The author is supported in part by NSFC grant 11371043.}

\begin{abstract}
  A result of Bennett and Grosse-Erdmann
characterizes the weights for which the corresponding weighted Hardy inequality holds on the cone of non-negative, non-increasing sequences and a bound for the best constant is given. In this paper, we improve the bound for $1<p \leq 2$.
\end{abstract}

\maketitle
\section{Introduction}
\label{sec 1} \setcounter{equation}{0}

  Throughout this paper, we let $p \geq 1$. For $p \neq 1$  we let $q$ be defined by $\frac{1}{p}+\frac{1}{q}=1$ and we set $1/q=0$ when $p=1$. Consider the following weighted Hardy inequality on the cone of non-negative, non-increasing sequences ${\bf x}=(x_n)_{n \geq 1}$:
\begin{align}
\label{1.01}
 \sum^{\infty}_{n=1}b_n \left (\sum^{n}_{k=1}\frac {x_k}{n} \right
   )^p \leq U_p\sum^{\infty}_{n=1}b_nx^p_n,
\end{align}
   where $(b_n)_{n \geq 1}$ is a non-negative sequence, $U_p>0$ a constant independent of ${\bf x}$. In \cite[Theorem 1]{BGE}, Bennett and Grosse-Erdmann gave a complete characterization on the sequence $(b_n)_{n \geq 1}$ such that $U_p$ exists. They showed that this is the case if and only if there exists a constant $U'_p>0$ such that for all $n \geq 1$,
\begin{equation*}
   \sum^{\infty}_{k=n}\frac {b_k}{k^p} \leq \frac {U'_p}{n^p}\sum^{n}_{k=1}b_k.
\end{equation*}
   Moreover, if the constants $U_p, U'_p$ are chosen best possible, then
\begin{align}
\label{1.2}
   U'_p \leq U_p \leq p^p(U'_p+1)^p.
\end{align}

   Integral inequalities analogous to \eqref{1.01} for non-increasing functions have been studied by Ari\~no and Muckenhoupt in \cite{AM}. They showed that if $p \geq 1$ and $v$ is a non-negative measurable
function on $(0, \infty)$ then there is a constant $V_p>0$ such that
\begin{align*}
  \int^{\infty}_0v(x)\left ( \frac 1x\int^x_0f(t)dt\right )^p dx \leq V_p\int^{\infty}_{0}v(x)f^p(x)dx
\end{align*}
   holds for all non-negative non-increasing functions $f(x)$ if and only if there is a constant $V'_p>0$ such that for all $x>0$,
\begin{align*}
  \int^{\infty}_x\frac{v(t)}{t^p}dt \leq \frac {V'_p}{x^p}\int^{x}_{0}v(t)dt.
\end{align*}
   The argument of Bennett and Grosse-Erdmann also works for the integral case and it implies that \cite[(17)]{BGE} if the constants $V_p, V'_p$ are chosen best possible, then
\begin{align*}
   V'_p \leq V_p \leq (V'_p+1)^p.
\end{align*}

   Comparing the above two results, we see that in the discrete case, the corresponding bounds for the best constants are not as good as what is given in the integral case. It is then natural to seek for an improvement on the bounds given in \eqref{1.2}, which is the goal of this paper.
   Our result in this paper is the following generalization of the above mentioned result of Bennett and Grosse-Erdmann:
\begin{theorem}
\label{thm1}
   Let $p \geq 1$ be fixed. Let $(b_n)_{n \geq 1}$ be a non-negative sequence and let $(\lambda_n)_{n \geq 1}$ be a non-negative,
    non-increasing sequence with $\lambda_1 >0$. Let $\Lambda_n=\sum^n_{k=1}\lambda_k$.
   Then there is a constant $U_p > 0$ such that
\begin{align}
\label{1.0}
   \sum^{\infty}_{n=1}b_n \left (\sum^{n}_{k=1}\frac {\lambda_kx_k}{\Lambda_n} \right
   )^p \leq U_p\sum^{\infty}_{n=1}b_nx^p_n
\end{align}
  holds for all non-negative, non-increasing sequences $(x_n)_{n \geq 1}$ if and
only if there is a constant $U'_p > 0$ such that for all $n \geq 1$,
\begin{align}
\label{1.0'}
   \sum^{\infty}_{k=n}\frac{b_k}{\Lambda^p_k} \leq \frac
   {U'_p}{\Lambda^p_n}\sum^{n}_{k=1}b_k.
\end{align}
   Moreover, if $U_p$ and $U'_p$ are chosen best-possible then we have
\begin{align}
\label{1.7}
   U'_p \leq U_p \leq  \left\{\begin{array}{ll}
\left (pU'_p+1 \right )^p, & 1 \leq p \leq 2;   \\
 p^p(U'_p+1)^p, & p >2.
\end{array}\right.
\end{align}
\end{theorem}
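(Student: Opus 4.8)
The plan is to handle the two directions separately: the necessity half is routine and carries no constant issues, while the sufficiency half contains essentially all the content, including the sharp bound in \eqref{1.7}.

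For necessity I would test \eqref{1.0} against the non-increasing sequence $x_k=1$ for $k\le n$ and $x_k=0$ for $k>n$. Writing $S_m=\sum_{k=1}^m\lambda_k x_k$, this choice gives $S_m=\Lambda_m$ for $m\le n$ and $S_m=\Lambda_n$ for $m>n$, so the left-hand side of \eqref{1.0} equals $\sum_{m=1}^n b_m+\Lambda_n^p\sum_{m>n}b_m/\Lambda_m^p$ while the right-hand side is $U_p\sum_{k=1}^n b_k$. Since $b_n\le\sum_{k=1}^n b_k$, adding the diagonal term and rearranging yields
\[
\sum_{k=n}^{\infty}\frac{b_k}{\Lambda_k^p}\le\frac{U_p}{\Lambda_n^p}\sum_{k=1}^n b_k,
\]
which is exactly \eqref{1.0'} with $U'_p\le U_p$.

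For sufficiency I would first reduce to finitely supported sequences and set $A_n=S_n/\Lambda_n$, $B_n=\sum_{k=1}^n b_k$, and $\beta_n=\sum_{k\ge n}b_k/\Lambda_k^p$. The starting point is the identity $\sum_n b_nA_n^p=\sum_n(b_n/\Lambda_n^p)S_n^p$, to which I apply Abel summation to get $\sum_n\beta_n(S_n^p-S_{n-1}^p)$ (with $S_0=0$); the purpose of this rearrangement is that the tail sums $\beta_n$ are precisely the quantities that \eqref{1.0'} controls through $\beta_n\le U'_pB_n/\Lambda_n^p$. The crude convexity estimate $S_n^p-S_{n-1}^p\le pS_n^{p-1}(S_n-S_{n-1})$, valid for all $p\ge1$, is what produces the Bennett--Grosse-Erdmann bound $p^p(U'_p+1)^p$. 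To improve it for $1\le p\le2$ I would instead use the sharper inequality $(a+h)^p\le a^p+pa^{p-1}h+h^p$ for $a,h\ge0$, which follows from the subadditivity of $t\mapsto t^{p-1}$ on $[0,\infty)$ when $p-1\in[0,1]$ and genuinely fails for $p>2$; this is the source of the dichotomy in \eqref{1.7}. With $a=S_{n-1}$ and $h=\lambda_n x_n$ it gives $S_n^p-S_{n-1}^p\le pS_{n-1}^{p-1}\lambda_n x_n+(\lambda_n x_n)^p$, so that, with $T=\sum_n b_nA_n^p$ and $R=\sum_n b_nx_n^p$, the quantity $T$ splits into a main cross term and a lower-order term. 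The goal is then to arrange the estimates into the self-improving inequality
\[
T\le pU'_p\,T^{1/q}R^{1/p}+R;
\]
writing $t=(T/R)^{1/p}$ this reads $t^p\le pU'_p\,t^{p-1}+1$, and since $(pU'_p+1)^p-pU'_p(pU'_p+1)^{p-1}=(pU'_p+1)^{p-1}\ge1$ while $t^p-pU'_p t^{p-1}$ is increasing past the relevant critical point, one concludes $t\le pU'_p+1$, i.e.\ $U_p\le(pU'_p+1)^p$.

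The hard part will be producing the cross-term bound with the \emph{sharp} constant $U'_p$, and likewise controlling the lower-order term, by $R$. After inserting \eqref{1.0'} and applying Hölder's inequality one is led to the weight $B_n\lambda_n/(b_n\Lambda_n)$, which is not bounded in general -- it would require $b_n/\lambda_n$ to be non-decreasing -- so a naive application loses control; this is exactly why the discrete constant is more delicate than the integral constant $(V'_p+1)^p$, for which the analogous step is clean. Overcoming this, by a more careful pairing in the summation by parts arranged so that the partial sums $B_n$ telescope against the increments of $S_n$ rather than appearing as an unbounded weight, is where the real work lies, and where the restriction $1\le p\le 2$, through the refined convexity inequality, is essential.
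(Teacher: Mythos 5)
Your necessity argument is correct and matches the paper's, and the closing self-improvement algebra (from $t^p\le pU'_pt^{p-1}+1$ to $t\le pU'_p+1$, using $(pU'_p+1)^{p-1}\ge 1$) is sound. The gap is in the sufficiency half, and it is not merely that the cross-term estimate is left open (you yourself call it ``where the real work lies''): the specific division of labor you propose is unachievable. The lower-order bound fails outright. Take $\lambda_k\equiv 1$, $b=(1,M,0,0,\ldots)$, which satisfies \eqref{1.0'} with best constant $U'_p=1+M2^{-p}$, and $x=(1,0,0,\ldots)$. Then your lower-order term is $\sum_n\beta_n(\lambda_nx_n)^p=\beta_1=1+M2^{-p}$ while $R=1$, so it exceeds $R$ by an arbitrarily large factor; worse, in this example the cross term vanishes (since $S_0=0$ and $x_k=0$ for $k\ge2$), so the entire quantity $T$ is carried by the term you hoped to bound by $R$. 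Hence the $U'_p$-dependence cannot be confined to the cross term, and the self-improving inequality cannot be reached along this route. There is also a structural reason your pointwise refinement $(a+h)^p\le a^p+pa^{p-1}h+h^p$ cannot yield $pU'_p+1$: it keeps the factor $p$ on the cross term, and the diagonal part of the resulting weight, $p\sum_{k\le i}\lambda_k\Lambda_k^{p-1}$, always lies between $\Lambda_i^p$ and $p\Lambda_i^p$ and attains the upper end (e.g.\ at $i=1$), so even with the correct partial-summation machinery this arrangement recovers only the Bennett--Grosse-Erdmann constant, plus a positive extra term.

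The paper's proof puts the refinement where it must go: into the constant multiplying the full cross term. Lemma \ref{lem1} (proved by a variational argument, together with the monotonicity statement of Lemma \ref{lem2.2}) shows that for non-increasing $(a_k)$ one has $(\sum_{k\le n}\lambda_ka_k)^p\le C_{n,p,\lambda}\sum_{k\le n}\lambda_ka_k(\sum_{i\le k}\lambda_ia_i)^{p-1}$ with the best-possible constant $C_{n,p,\lambda}=\Lambda_n^p/\sum_{i\le n}\lambda_i\Lambda_i^{p-1}\le p$ when $1\le p\le 2$. The key is the exact identity $C_{i,p,\lambda}\sum_{k\le i}\lambda_k\Lambda_k^{p-1}=\Lambda_i^p$: in estimating the partial sums of the weight $w_k=\lambda_k\Lambda_k^{p-1}\sum_{n\ge k}C_{n,p,\lambda}b_n/\Lambda_n^p$ (inequality \eqref{2.3}), the diagonal part then contributes exactly $\sum_{i\le n}b_i$, while the tail part, where \eqref{1.0'} is inserted only for indices beyond $n$, contributes $pU'_p\sum_{i\le n}b_i$; this is the source of $pU'_p+1$. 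Your hoped-for ``more careful pairing'' is precisely the paper's Lemma \ref{lem2}: because the partial sums of $w_k$ are dominated by those of $(pU'_p+1)b_k$, and because $x_kA_k^{p-1}$ is non-increasing, one may replace $w_k$ by $(pU'_p+1)b_k$ in $\sum_k w_kx_kA_k^{p-1}$; H\"older's inequality then closes the argument. In short, the unbounded weight $B_n\lambda_n/(b_n\Lambda_n)$ you correctly identified is circumvented by never applying \eqref{1.0'} pointwise, only at the level of partial sums tested against a monotone sequence, and the constant $1$ (rather than $p$) comes from the sharp constant in Lemma \ref{lem1}, not from a pointwise convexity refinement.
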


   The case $\lambda_n=1$ of Theorem \ref{thm1} gives back the result of
   Bennett and Grosse-Erdmann except that instead of \eqref{1.7}, the upper
   bound given for $U_p$ in \cite[Theorem 1]{BGE} is given as in \eqref{1.2} for all $p \geq 1$. Theorem \ref{thm1} therefore improves upon the result of Bennett and Grosse-Erdmann for $1 < p \leq 2$ in this sense.
We point out here that this improvement comes from our refinement (see Lemma \ref{lem1}) on the so called ``Power Rule" (Lemma \ref{lem0} below), a key lemma used in the proof of \cite[Theorem 1]{BGE} by Bennett and Grosse-Erdmann.

\section{lemmas}
\label{sec 2} \setcounter{equation}{0}
   
\begin{lemma}[{\cite[Lemma 3]{BGE}}]
\label{lem0} Let $p \geq 1$. Then for all non-negative sequences $(a_k)_{k \geq 1}$, any integer $n \geq 1$,
\begin{align*}
   \left ( \sum^{\infty}_{k=n}a_k\right )^p \leq p\sum^{\infty}_{k=n}a_k\left ( \sum^{\infty}_{i=k} a_i\right
   )^{p-1}.
\end{align*}
\end{lemma}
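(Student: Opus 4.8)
The plan is to abbreviate the tail sums by $A_k = \sum_{i=k}^{\infty} a_i$, so that the right-hand side reads $p\sum_{k=n}^{\infty} a_k A_k^{p-1}$ and the assertion becomes
\begin{align*}
  A_n^p \le p\sum_{k=n}^{\infty} a_k A_k^{p-1}.
\end{align*}
Since $a_k \ge 0$, the sequence $(A_k)$ is non-increasing, and $a_k = A_k - A_{k+1} \ge 0$. The idea is to estimate each block $A_k^p - A_{k+1}^p$ from above and then telescope the sum.

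The heart of the argument is the elementary pointwise inequality: for $p \ge 1$ and $0 \le y \le x$,
\begin{align*}
  x^p - y^p = \int_y^x p\,t^{p-1}\,dt \le p\,x^{p-1}(x-y),
\end{align*}
where the estimate uses that $t^{p-1}$ is non-decreasing on $[y,x]$ (because $p-1 \ge 0$), so $t^{p-1} \le x^{p-1}$ throughout the interval. Equivalently, this says the tangent to the convex function $t \mapsto t^p$ at the right endpoint dominates the secant. I would apply it with $x = A_k$ and $y = A_{k+1}$ to get
\begin{align*}
  A_k^p - A_{k+1}^p \le p\,A_k^{p-1}(A_k - A_{k+1}) = p\,a_k A_k^{p-1}.
\end{align*}

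Summing over $k$ from $n$ to $N$ makes the left-hand side telescope to $A_n^p - A_{N+1}^p$, yielding
\begin{align*}
  A_n^p - A_{N+1}^p \le p\sum_{k=n}^{N} a_k A_k^{p-1} \le p\sum_{k=n}^{\infty} a_k A_k^{p-1}.
\end{align*}
Letting $N \to \infty$ finishes the proof: if $A_n < \infty$ then $A_{N+1} \to 0$, so $A_{N+1}^p \to 0$ and the left-hand side tends to $A_n^p$. If instead $A_n = \infty$ the claim is trivial, since then $A_k = \infty$ for every $k$ and some $a_k > 0$, forcing the right-hand side to be $\infty$ as well (and for $p = 1$ both sides are simply $A_n$).

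The pointwise inequality itself is routine once written as an integral, so the one step I would watch most carefully is the passage to the limit: confirming $A_{N+1}^p \to 0$ in the convergent case and separately disposing of the divergent case. No convexity machinery beyond the monotonicity of $t^{p-1}$ is required.
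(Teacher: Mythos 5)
Your proof is correct. Note that the paper itself offers no proof of this lemma: it is quoted verbatim from Bennett and Grosse-Erdmann \cite[Lemma 3]{BGE}, so there is no internal argument to compare against. Your argument is the standard (and essentially the original) one: the tangent-line estimate $x^p-y^p\le p\,x^{p-1}(x-y)$ for $0\le y\le x$, applied to consecutive tails $x=A_k$, $y=A_{k+1}$, followed by telescoping. You handle the two points where such proofs usually get sloppy --- the passage $A_{N+1}^p\to 0$ in the convergent case, and the divergent case $A_n=\infty$ (where some $a_k>0$ forces the right-hand side to be infinite, with the $p=1$ case read off separately) --- so the proof is complete as written.
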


\begin{lemma}[{{\cite[Lemma 2]{BGE}}}]
\label{lem2}
    Let $(u_n)_{n \geq 1}, (v_n)_{n \geq 1}$ be two non-negative sequences satisfying for any integer $n \geq 1$,
\begin{equation*}
   \sum_{i=1}^n u_i \leq \sum_{i=1}^n v_i,
\end{equation*}
    then for all non-negative, non-increasing sequences $(a_n)_{n \geq 1}$,
\begin{equation*}
   \sum_{i=1}^n u_i a_i \leq \sum_{i=1}^n v_i a_i.
\end{equation*}
\end{lemma}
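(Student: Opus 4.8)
The plan is to prove the asserted inequality for each fixed $n$ by Abel summation (summation by parts). First I would introduce the partial sums
\[
  U_j = \sum_{i=1}^j u_i, \qquad V_j = \sum_{i=1}^j v_i,
\]
with the convention $U_0 = V_0 = 0$, so that the hypothesis reads $U_j \leq V_j$ for every $j \geq 1$. Writing $u_i = U_i - U_{i-1}$ and reindexing, the standard summation-by-parts identity gives
\[
  \sum_{i=1}^n u_i a_i = U_n a_n + \sum_{i=1}^{n-1} U_i (a_i - a_{i+1}),
\]
and the analogous identity holds with $u, U$ replaced by $v, V$.

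The key observation is that every coefficient multiplying $U_i$ (respectively $V_i$) on the right-hand side is non-negative: the boundary factor $a_n \geq 0$ since $(a_n)$ is non-negative, and each difference $a_i - a_{i+1} \geq 0$ since $(a_n)$ is non-increasing. I would therefore replace $U_n$ by $V_n$ and each $U_i$ by $V_i$ term by term, using $U_i \leq V_i$, to obtain
\[
  \sum_{i=1}^n u_i a_i \leq V_n a_n + \sum_{i=1}^{n-1} V_i (a_i - a_{i+1}) = \sum_{i=1}^n v_i a_i,
\]
which is precisely the desired conclusion.

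I do not expect any genuine obstacle here beyond careful bookkeeping; the entire argument rests on the elementary Abel rearrangement together with the two sign conditions supplied by the non-negativity and monotonicity of $(a_n)$. The only point requiring attention is the boundary term $U_n a_n$: the convention $U_0 = 0$ is what makes the summation-by-parts identity valid, and this term must be controlled using $a_n \geq 0$ rather than a difference of consecutive $a_i$. Should one want the statement for the full series (the limit $n \to \infty$), I would simply apply the finite-$n$ inequality and let $n \to \infty$, noting that both sides are non-decreasing in $n$, so the limits exist in $[0,\infty]$ and the inequality is preserved in the limit.
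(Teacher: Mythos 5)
Your proof is correct, and it is the standard summation-by-parts argument: the paper itself states this lemma without proof, citing \cite[Lemma 2]{BGE}, whose proof is exactly the Abel rearrangement you give, with the boundary term handled by $a_n \geq 0$ and the differences by monotonicity. Nothing further is needed.
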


\begin{lemma}[{\cite[Lemma 3.1]{G7}}]
\label{lem3}
   Let $(B_n )_{n\geq 1}$ and $( C_n )_{n \geq 1}$ be strictly increasing positive sequences with
   $B_1/B_2 \leq C_1 / C_2$. If for any integer $n \geq 1$,
\begin{equation*}
  \frac {B_{n+1}-B_n}{B_{n+2}-B_{n+1}} \leq  \frac
  {C_{n+1}-C_n}{C_{n+2}-C_{n+1}}.
\end{equation*}
  Then $B_{n}/B_{n+1} \leq C_{n} / C_{n+1}$ for any integer $n \geq 1$.
\end{lemma}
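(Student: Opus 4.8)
The plan is to recast the two hypotheses as a single monotone recursion for the ratios in question and then run a one-line induction. Write $\beta_n = B_n/B_{n+1}$ and $\gamma_n = C_n/C_{n+1}$; since both sequences are strictly increasing and positive, $\beta_n,\gamma_n \in (0,1)$. Set also $r_n = (B_{n+1}-B_n)/(B_{n+2}-B_{n+1})$ and $s_n = (C_{n+1}-C_n)/(C_{n+2}-C_{n+1})$, so that the hypothesis on differences is simply $r_n \le s_n$ for every $n \ge 1$, and the initial hypothesis is $\beta_1 \le \gamma_1$.

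The key step is to express $\beta_{n+1}$ through $\beta_n$ and $r_n$ alone. Writing $B_{n+2} = B_{n+1} + (B_{n+2}-B_{n+1})$ and substituting $B_{n+2}-B_{n+1} = (B_{n+1}-B_n)/r_n$, I get
\begin{align*}
  \frac{1}{\beta_{n+1}} = \frac{B_{n+2}}{B_{n+1}} = 1 + \frac{1}{r_n}\Bigl(1 - \frac{B_n}{B_{n+1}}\Bigr) = 1 + \frac{1-\beta_n}{r_n},
\end{align*}
so that $\beta_{n+1} = \Phi(\beta_n, r_n)$ with $\Phi(b,r) = r/(r+1-b)$, and identically $\gamma_{n+1} = \Phi(\gamma_n, s_n)$.

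The decisive observation is that on the admissible region $b \in (0,1)$, $r>0$ (where the denominator $r+1-b$ is positive) the map $\Phi$ is strictly increasing in each of its two arguments: a direct computation gives $\partial_b \Phi = r/(r+1-b)^2 > 0$ and $\partial_r \Phi = (1-b)/(r+1-b)^2 > 0$. Granting this, the induction closes at once. The base case is the assumption $\beta_1 \le \gamma_1$, and if $\beta_n \le \gamma_n$ then, using $r_n \le s_n$ as well,
\begin{align*}
  \beta_{n+1} = \Phi(\beta_n, r_n) \le \Phi(\gamma_n, r_n) \le \Phi(\gamma_n, s_n) = \gamma_{n+1},
\end{align*}
which is the desired inequality at level $n+1$.

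I do not anticipate a genuine obstacle in this argument; it is essentially a monotonicity-of-a-M\"obius-map induction. The only points demanding care are the algebraic derivation of the recursion and the bookkeeping that keeps every quantity in its admissible range — positivity of the consecutive differences and the strict bounds $\beta_n, \gamma_n < 1$ — since these are exactly what guarantee that $\Phi$ is well defined and monotone throughout.
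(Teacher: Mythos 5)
Your proof is correct. One thing to be aware of: this paper contains no proof of the lemma at all --- it is quoted from \cite[Lemma 3.1]{G7} --- so there is no internal argument to compare against; your write-up supplies a self-contained proof where the paper relies on a citation. Your argument is the natural induction, and it is worth noting that the monotonicity of your map $\Phi(b,r)=r/(r+1-b)$ in its two arguments is exactly equivalent to the following two-line multiplicative phrasing of the inductive step: the hypothesis $\beta_n\le\gamma_n$ says
\begin{equation*}
  \frac{B_{n+1}-B_n}{B_{n+1}} \;\geq\; \frac{C_{n+1}-C_n}{C_{n+1}},
\end{equation*}
and multiplying this by the difference hypothesis in the form
\begin{equation*}
  \frac{B_{n+2}-B_{n+1}}{B_{n+1}-B_n} \;\geq\; \frac{C_{n+2}-C_{n+1}}{C_{n+1}-C_n}
\end{equation*}
gives $(B_{n+2}-B_{n+1})/B_{n+1} \geq (C_{n+2}-C_{n+1})/C_{n+1}$, i.e.\ $B_{n+1}/B_{n+2}\le C_{n+1}/C_{n+2}$. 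Your derivation of the recursion $1/\beta_{n+1}=1+(1-\beta_n)/r_n$, the positivity of the denominators, and the strict bounds $\beta_n,\gamma_n\in(0,1)$ are all handled correctly, so the induction closes with no gaps.
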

\begin{lemma}
\label{lem2.2} Let $1 \leq p \leq 2$ and let $n \geq 1$ be a fixed integer. Let $\lambda=(\lambda_k)_{1 \leq k \leq n}$ be a non-negative, non-increasing sequence with $\lambda_1>0$. For $1 \leq k \leq n$, let $\Lambda_k=\sum^k_{i=1}\lambda_i$ and
\begin{equation*}
    C_{k,p,\lambda}=\frac
   {\Lambda^p_k}{\sum^{k}_{i=1}\lambda_i\Lambda^{p-1}_i}.
\end{equation*}
Then the sequence $(C_{k,p,\lambda})_{1 \leq k \leq n}$ is increasing with respect
to $k$.
\end{lemma}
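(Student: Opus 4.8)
The plan is to invoke Lemma \ref{lem3} with the choices $B_k=\Lambda_k^p$ and $C_k=\sum_{i=1}^k\lambda_i\Lambda_i^{p-1}$, since the assertion that $C_{k,p,\lambda}=B_k/C_k$ is increasing is exactly equivalent to the conclusion $B_k/B_{k+1}\le C_k/C_{k+1}$ of that lemma. First I would dispose of the degenerate case: because $\lambda$ is non-increasing with $\lambda_1>0$, either $\lambda_k>0$ for every $k\le n$, or $\lambda_k=0$ from some index on, in which case $\Lambda_k$, $C_k$, and hence $C_{k,p,\lambda}$ are eventually constant and the claim is immediate there. So I may assume all $\lambda_k>0$, which makes $B_k$ and $C_k$ strictly increasing and positive, as Lemma \ref{lem3} requires.

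It then remains to check the two hypotheses of Lemma \ref{lem3}. The base inequality $B_1/B_2\le C_1/C_2$ simplifies, after using $\lambda_1=\Lambda_1$ and clearing denominators, to $\Lambda_1^{p-1}\le\Lambda_2^{p-1}$, which holds because $p\ge 1$ and $\Lambda_1\le\Lambda_2$. For the second-difference hypothesis, writing $C_{k+1}-C_k=\lambda_{k+1}\Lambda_{k+1}^{p-1}$ and $B_{k+1}-B_k=\Lambda_{k+1}^p-\Lambda_k^p$, and setting $a=\Lambda_n$, $b=\Lambda_{n+1}$, $c=\Lambda_{n+2}$ (so that $\lambda_{n+1}=b-a$ and $\lambda_{n+2}=c-b$), the required inequality rearranges to $\frac{b^p-a^p}{(b-a)b^{p-1}}\le\frac{c^p-b^p}{(c-b)c^{p-1}}$.

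The key observation is that each side depends only on a ratio: dividing numerator and denominator by the larger argument gives $\frac{b^p-a^p}{(b-a)b^{p-1}}=h(a/b)$ and $\frac{c^p-b^p}{(c-b)c^{p-1}}=h(b/c)$, where $h(t)=(1-t^p)/(1-t)$ for $t\in(0,1)$. Thus the hypothesis reduces to the one-variable comparison $h(a/b)\le h(b/c)$, which I would establish from two facts. First, $h$ is increasing on $(0,1)$: differentiating, the sign of $h'$ is that of $N(t)=1+(p-1)t^p-pt^{p-1}$, and since $N(1)=0$ while $N'(t)=p(p-1)t^{p-2}(t-1)\le 0$ on $(0,1)$, we get $N\ge 0$, hence $h'\ge 0$. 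Second, the sequence $\Lambda$ is log-concave: $\Lambda_{n+1}^2-\Lambda_n\Lambda_{n+2}=\Lambda_{n+1}\lambda_{n+1}-\Lambda_n\lambda_{n+2}\ge 0$, because $\Lambda_{n+1}\ge\Lambda_n$ and $\lambda_{n+1}\ge\lambda_{n+2}$, and this is precisely $a/b\le b/c$. Monotonicity of $h$ then yields $h(a/b)\le h(b/c)$.

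With both hypotheses verified, Lemma \ref{lem3} gives $B_k/B_{k+1}\le C_k/C_{k+1}$ for all relevant $k$, which is the desired monotonicity of $C_{k,p,\lambda}$. I expect the main obstacle to be the second-difference condition; the crucial simplification is recognizing that it collapses to the comparison $h(a/b)\le h(b/c)$, after which the short monotonicity computation for $h$ and the log-concavity of $\Lambda$ do the rest.
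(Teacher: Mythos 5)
Your proof is correct, and while it rests on the same pillar as the paper's --- Lemma \ref{lem3} applied to $B_k=\Lambda_k^p$ and $C_k=\sum_{i=1}^k\lambda_i\Lambda_i^{p-1}$, with the same base-case check and the same reduction to the case of strictly positive $\lambda_k$ --- your verification of the difference-ratio hypothesis is genuinely different. The paper first argues that the right-hand side of
\begin{equation*}
\frac{\Lambda_{k+1}^p-\Lambda_k^p}{\lambda_{k+1}\Lambda_{k+1}^{p-1}}\leq\frac{\Lambda_{k+2}^p-\Lambda_{k+1}^p}{\lambda_{k+2}\Lambda_{k+2}^{p-1}}
\end{equation*}
is decreasing in $\lambda_{k+2}$, reducing matters to the worst case $\lambda_{k+2}=\lambda_{k+1}$, and then passes through two changes of variables to the one-variable inequality $g(t)=t-(1+t)^{1-p}+(1-t)^p\geq 0$ on $[0,1/2]$, proved via $g''\geq 0$ --- and it is precisely this last step that uses $1<p\leq 2$, since the argument rests on $(1-t)^{p-2}\geq 1\geq (1+t)^{-p-1}$. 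You instead observe that the two sides of the displayed inequality are values of the single function $h(t)=(1-t^p)/(1-t)$ at the ratios $\Lambda_k/\Lambda_{k+1}$ and $\Lambda_{k+1}/\Lambda_{k+2}$, and you decouple the problem into two independent facts: $h$ is increasing on $(0,1)$, and $(\Lambda_k)$ is log-concave, which is exactly where the non-increasing hypothesis on $\lambda$ enters. Both of your facts hold for every $p\geq 1$, so your argument establishes the monotonicity of $C_{k,p,\lambda}$ without the restriction $p\leq 2$; that restriction is genuinely needed in the paper only where $C_{n,p,\lambda}$ is claimed to be the (sharp) constant in Lemma \ref{lem1}, not for this monotonicity statement. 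In this sense your route is both cleaner (no worst-case reduction, no iterated substitutions) and strictly more general. One cosmetic point: Lemma \ref{lem3} is stated for infinite sequences, so to invoke it literally you should extend the finite sequence --- the paper sets $\lambda_i=\lambda_n/i$ for $i>n$; any positive non-increasing extension leaves your verification intact, so this is a one-sentence fix rather than a gap.
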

\begin{proof}
   The assertion holds trivially when $p=1$, so we may assume $p>1$. We may assume $n \geq 2$ and $\lambda_k>0$ for all $1 \leq k \leq n$. We extend the sequence $\lambda$ to be indexed by all positive integers by defining $\lambda_i=\lambda_n/i$ for $i \geq n+1$. We define similarly $\Lambda_k, C_{k,p,\lambda}$ for $k > n$. It therefore suffices to show that $C_{k, p, \lambda} \leq C_{k+1, p, \lambda}$
   for all $k \geq 1$. Applying Lemma \ref{lem3} with $B_k=\Lambda^p_k, C_k=\sum^{k}_{i=1}\lambda_i\Lambda^{p-1}_i$, 
   ones checks directly that $B_1/B_2 \leq C_1/C_2$. Thus, it remains to
   show for that all $k \geq 1$,
\begin{align*}
   \frac {\Lambda^p_{k+1}-\Lambda^p_{k}}{\lambda_{k+1}\Lambda^{p-1}_{k+1}}
   \leq \frac
   {\Lambda^p_{k+2}-\Lambda^p_{k+1}}{\lambda_{k+2}\Lambda^{p-1}_{k+2}}.
\end{align*}
   When we regard $\lambda_{k+2}$ as a variable with $0 \leq
   \lambda_{k+2} \leq \lambda_{k+1}$, then it is easy to see that
   the right-hand side expression above is a decreasing function of
   $\lambda_{k+2}$ and hence it suffices to show that the above
   inequality holds with $\lambda_{k+2}=\lambda_{k+1}$. In this
   case, on setting $\lambda_{k+1}=x, \Lambda_k=y$ with $y \geq
   x$, we can recast the above inequality as
\begin{align*}
   x-(x+y)^p(2x+y)^{1-p}+y^p(x+y)^{1-p} \geq 0.
\end{align*}
   We further set $z=x/y$ to recast the above inequality as
\begin{align*}
   z-(1+z)^p(1+2z)^{1-p}+(1+z)^{1-p} \geq 0.
\end{align*}
   Upon dividing $1+z$ on both sides of the above inequality and
   setting $t=z/(1+z)$, we see that it suffices to show for $0
   \leq t \leq 1/2$,
\begin{align*}
   g(t):=t-(1+t)^{1-p}+(1-t)^p \geq 0.
\end{align*}
   It's easy to see that $g(0)=g'(0)=0$ and
   $g''(t)=p(p-1)((1-t)^{p-2}-(1+t)^{-p-1}) \geq 0$ when $1<p \leq 2$. This implies that $g(t)$ is an increasing function of $0 \leq t \leq 1/2$ which 
   completes the proof.
\end{proof}
\begin{lemma}
\label{lem1} Let $p \geq 1$, $\lambda=(\lambda_k)_{k \geq 1}$ a
non-negative, non-increasing sequence with $\lambda_1>0$.  Then for all non-negative,
non-increasing sequences $(a_k)_{k \geq 1}$, any integer $n \geq 1$,
\begin{equation}
\label{2.1}
   \left ( \sum^n_{k=1}\lambda_ka_k\right )^p \leq C_{n,p, \lambda}\sum^n_{k=1}\lambda_ka_k\left ( \sum^k_{i=1}\lambda_ia_i\right
   )^{p-1},
\end{equation}
   where $C_{n, p,\lambda}$ is defined as in Lemma \ref{lem2.2} when $1 \leq p \leq 2$ and $C_{n,p,\lambda}=p$ when $p>2$. Moreover, when $1 \leq p \leq 2$, the constant $C_{n,p,\lambda}$ is best possible and equality in
    \eqref{2.1} holds when $1<p \leq 2$ if and only if $a_1=a_2=\ldots=a_n$.
\end{lemma}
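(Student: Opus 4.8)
The plan is to handle the three ranges $p=1$, $p>2$, and $1<p\le 2$ separately. For $p=1$ the two sides of \eqref{2.1} coincide (and indeed $C_{n,1,\lambda}=1$). For $p>2$, where $C_{n,p,\lambda}=p$, I would mimic the proof of the Power Rule (Lemma \ref{lem0}): writing $S_k=\sum_{i=1}^k\lambda_i a_i$ and using that $t\mapsto t^{p-1}$ is increasing, one has $S_k^p-S_{k-1}^p=\int_{S_{k-1}}^{S_k}pt^{p-1}\,dt\le p(S_k-S_{k-1})S_k^{p-1}=p\lambda_k a_k S_k^{p-1}$, and summing this telescoping bound over $1\le k\le n$ gives \eqref{2.1} with constant $p$. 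This half uses neither the monotonicity of $(a_k)$ nor that of $(\lambda_k)$.

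The substance is the case $1<p\le 2$, which I would prove by induction on $n$. The base case $n=1$ is an equality since $C_{1,p,\lambda}=\Lambda_1/\lambda_1=1$. For the inductive step write $C_k=C_{k,p,\lambda}$ and $T_n=\sum_{k=1}^n\lambda_k a_k S_k^{p-1}$, so \eqref{2.1} reads $S_n^p\le C_nT_n$. Assuming $S_{n-1}^p\le C_{n-1}T_{n-1}$ and using $C_n\ge C_{n-1}$ from Lemma \ref{lem2.2}, it suffices to prove
\[
   S_n^p\le \frac{C_n}{C_{n-1}}S_{n-1}^p+C_n\lambda_n a_n S_n^{p-1}.
\]
Setting $u=S_{n-1}$, $v=\lambda_n a_n$ (so $S_n=u+v$) and $h(v)=\frac{C_n}{C_{n-1}}u^p+C_nv(u+v)^{p-1}-(u+v)^p$, I would show $h(v)\ge 0$ on the admissible range of $v$. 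A direct computation gives $h''(v)=(p-1)(u+v)^{p-3}\big[(2C_n-p)u+p(C_n-1)v\big]\ge 0$, since $1<p\le 2$ and $C_n\ge C_1=1$ force $2C_n-p\ge 0$; thus $h$ is convex, and $h(0)=(C_n/C_{n-1}-1)u^p\ge 0$.

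Two further facts pin down the sign of $h$. First, the monotonicity of $(a_k)$ gives $S_{n-1}\ge \Lambda_{n-1}a_n$, i.e. $v\le v^*:=\lambda_n u/\Lambda_{n-1}$; and a short computation using $C_n=\Lambda_n^p/\sum_{i=1}^n\lambda_i\Lambda_i^{p-1}$ shows that $h(v^*)=0$ identically in $u$ (this corresponds to the constant configuration $a_k\equiv a_1$). Since $h$ is convex with $h(v^*)=0$, it remains to check $h'(v^*)\le 0$, for then $h'\le 0$ on $[0,v^*]$ and hence $h(v)\ge h(v^*)=0$ there. A computation reduces $h'(v^*)\le 0$ to $C_n(1+pr_n)\le p(1+r_n)$ with $r_n=\lambda_n/\Lambda_{n-1}$, and clearing denominators this is equivalent to the clean inequality
\[
   \Lambda_{n-1}\Lambda_n^{p-1}\le p\sum_{i=1}^{n-1}\lambda_i\Lambda_i^{p-1}.
\]
I expect this last inequality to be the crux. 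I would establish it by a separate induction on $n$: invoking the previous instance, the step reduces to $1+(p-1)\lambda_n/\Lambda_n\ge (1+\lambda_{n+1}/\Lambda_n)^{p-1}$, which follows from Bernoulli's inequality $(1+t)^{p-1}\le 1+(p-1)t$ (valid as $0\le p-1\le 1$) together with $\lambda_{n+1}\le\lambda_n$; the base case is handled the same way. This is the one place where both hypotheses $1\le p\le 2$ and $(\lambda_k)$ non-increasing are indispensable.

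Finally, for sharpness I would note that $a_1=\cdots=a_n$ turns \eqref{2.1} into an equality (both sides equal $a_1^p\Lambda_n^p$ after using the definition of $C_{n,p,\lambda}$), so the constant cannot be lowered and is best possible. For the converse when $1<p\le 2$, strict convexity of $h$ on $(0,v^*)$ shows $h(v)>0$ for $v<v^*$, so equality forces $v=v^*$, i.e. $a_n\Lambda_{n-1}=\sum_{k<n}\lambda_k a_k$; since $a_k\ge a_n$ this yields $a_k=a_n$ for all $k<n$, and the inductive hypothesis gives $a_1=\cdots=a_{n-1}$, so all the $a_k$ agree.
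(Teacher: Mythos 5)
Your proof is correct, but it takes a genuinely different route from the paper's. For $1<p\le 2$ the paper argues variationally: it sets $f_n(x_1,\ldots,x_n)$ equal to the difference of the two sides of \eqref{2.1}, reduces by homogeneity to the compact set $1\ge x_1\ge\cdots\ge x_n\ge 0$, and shows that a maximizer cannot have $x_m>x_{m+1}>0$, by comparing $\lambda_m^{-1}\partial f_n/\partial x_m$ with $\lambda_{m+1}^{-1}\partial f_n/\partial x_{m+1}$ and invoking the Mean Value Theorem; hence the maximum is attained at constant sequences, where $f_n=0$ by the very definition of $C_{n,p,\lambda}$ (zero trailing coordinates being absorbed via the monotonicity in Lemma \ref{lem2.2}). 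You instead induct on $n$: the step reduces to $h(v)\ge 0$ on $[0,v^*]$, which you obtain from convexity of $h$, the identity $h(v^*)=0$, and $h'(v^*)\le 0$, the last being equivalent to $\Lambda_{n-1}\Lambda_n^{p-1}\le p\sum_{i=1}^{n-1}\lambda_i\Lambda_i^{p-1}$, proved by a second induction from Bernoulli's inequality. I checked your computations (the formula for $h''$, the identity $h(v^*)=0$, the equivalence of $h'(v^*)\le 0$ with the clean inequality, and both inductions); they are sound. Your argument is more elementary (no extremal-point analysis on a compact set), it isolates exactly where $1<p\le 2$ and the monotonicity of $(\lambda_k)$ enter (only in the Bernoulli step), and your clean inequality, being equivalent to $C_{n,p,\lambda}(1+pr_n)\le p(1+r_n)$, actually sharpens the bound $C_{n,p,\lambda}\le p$ that the paper needs separately in the proof of Theorem \ref{thm1}.

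Two details to tighten. First, as the paper does, you should reduce at the outset to the case $\lambda_k>0$ for all $k$ (discarding zero terms): otherwise $C_n=1$ can occur for $p=2$ (e.g.\ $\lambda=(1,0,0,\ldots)$), $h$ then fails to be strictly convex, and the ``only if'' part of the equality statement is false as literally stated, so this normalization is genuinely needed. Second, in the equality analysis you should record that equality in \eqref{2.1} forces equality in every link of your chain, in particular $S_{n-1}^p=C_{n-1}T_{n-1}$, before the inductive hypothesis can be invoked; in fact, once $v=v^*$ you have $\sum_{k<n}\lambda_k(a_k-a_n)=0$, whence $a_k=a_n$ for all $k<n$ directly, so the appeal to the inductive hypothesis is not even necessary there.
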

\begin{proof}
    As inequality \eqref{2.1} follows from Lemma \ref{lem0} when $p>2$ and the assertion of the lemma follows trivially for $p=1$, we only need to consider the case $1<p \leq 2$. We define
\begin{equation*}
    f_n(x_1, x_2, \ldots, x_n)=\left ( \sum^n_{k=1}\lambda_kx_k\right )^p -C_{n,p,\lambda}\sum^n_{k=1}\lambda_kx_k\left ( \sum^k_{i=1}\lambda_ix_i\right
   )^{p-1}.
\end{equation*}
    By homogeneity, it suffices to show $f_n \leq 0$ on the compact set $\{ (x_1, \ldots, x_n) | 1 \geq
    x_1 \geq x_2 \geq \ldots \geq x_n \geq 0 \}$. We may assume $\lambda_k>0$ for all
    $k$ here as discarding the zero terms and relabeling will not change the expression.

   As $f_1=0$ holds trivially, we may assume $n \geq 2$ here.  Assume the maximum of $f_n$ is
    attained at some ${\bf x}_0=\left ( \left( {\bf x}_0 \right)_1, \left( {\bf x}_0 \right)_2, \ldots, \left( {\bf x}_0 \right)_n
    \right)$ with $\left( {\bf x}_0 \right)_1 \geq \left( {\bf x}_0 \right)_2 \geq
\ldots \geq \left( {\bf x}_0
    \right)_n$. If $\left( {\bf x}_0
    \right)_{m+1}=0$ for some $1 \leq m <n$, then as $C_{m,p, \lambda} \leq C_{n,p, \lambda}$ by Lemma \ref{lem2.2}, it is easy to
    see that we are reduced to the consideration of $f_m \leq 0$.
    Thus, we may further assume $\left( {\bf x}_0
    \right)_n >0$ here.

     Suppose $\left( {\bf x}_0
    \right)_m >\left( {\bf x}_0
    \right)_{m+1}>0$ for some $1 \leq m <n$.
    In this case we must have $\partial f_n/\partial x_m({\bf x}_0) \geq 0$ since $\partial f_n/\partial x_m({\bf x}_0) < 0$ means
    decreasing the value of $\left( {\bf x}_0
    \right)_m$ will increase the value of
    $f_n$, a contradiction. Similar argument implies that $\partial f_n/\partial x_{m+1}({\bf x}_0) \leq 0$.
    Therefore, we conclude that we have
\begin{align*}
    0 & \leq \frac {1}{\lambda_m}\frac {\partial f_n}{\partial x_m}({\bf x}_0)-\frac {1}{\lambda_{m+1}}\frac {\partial f_n}{\partial x_{m+1}}({\bf
    x}_0)  \\
    &=C_{n,p, \lambda}\left ( \left ( \sum^{m+1}_{i=1}\lambda_i\left( {\bf x}_0
    \right)_i\right
   )^{p-1}-\left ( \sum^m_{i=1}\lambda_i\left( {\bf x}_0
    \right)_i\right
   )^{p-1}-(p-1)\lambda_m\left( {\bf x}_0
    \right)_m\left ( \sum^m_{i=1}\lambda_i\left( {\bf x}_0
    \right)_i\right
   )^{p-2} \right ).
\end{align*}
   If $p=2$, this would imply $\lambda_{m+1}\left( {\bf x}_0
    \right)_{m+1} \geq \lambda_m\left( {\bf x}_0
    \right)_m$, a contradiction. If $1<p<2$, by the Mean Value Theorem, we have
\begin{align*}
    & \left ( \sum^{m+1}_{i=1}\lambda_i\left( {\bf x}_0
    \right)_i\right
   )^{p-1}-\left ( \sum^m_{i=1}\lambda_i\left( {\bf x}_0
    \right)_i\right
   )^{p-1} \\
   =&(p-1)\lambda_{m+1}\left( {\bf x}_0
    \right)_{m+1} \xi^{p-2} <(p-1)\lambda_{m}\left( {\bf x}_0
    \right)_m\left ( \sum^m_{i=1}\lambda_i\left( {\bf x}_0
    \right)_i\right
   )^{p-2},
\end{align*}
   as $\sum^m_{i=1}\lambda_i\left( {\bf x}_0
    \right)_i < \xi < \sum^{m+1}_{i=1}\lambda_i\left( {\bf x}_0
    \right)_i$. This again leads to a contradiction. Thus we must have
   $\left( {\bf x}_0 \right)_1 =\left( {\bf x}_0
\right)_2=\ldots=\left( {\bf x}_0 \right)_n$, which implies that
$f_n({\bf x}_0)=0$ and the assertion
   of the lemma follows for $1<p \leq 2$.
\end{proof}

    In what follows we make two remarks about Lemma \ref{lem1}.
    Throughout our remarks, we let $1 \leq p \leq 2$, $\lambda_k=1$ for all $k$ with the function $f_n$ being defined as in the proof of Lemma
    \ref{lem1} and $C_{n,p,\lambda}$ being defined as in
    Lemma \ref{lem2.2}.

    Remark 1. For any given ${\bf x}=(x_1,
    x_2, \ldots, x_n)$, we let ${\bf x}'=(x_1, x_2, \ldots, x_{i+1}, x_i, \ldots, x_n)$ by permuting
    two adjacent coordinates $x_i, x_{i+1}$ of ${\bf x}$ for some $1 \leq i
    <n$, then we have
\begin{align*}
   &f_n({\bf x})-f_n({\bf x}')  \\
   =&-C_{n,p, \lambda}\left ( x_i(a+x_i)^{p-1}+x_{i+1}(a+x_i+x_{i+1})^{p-1}-x_{i+1}(a+x_{i+1})^{p-1}-x_{i}(a+x_i+x_{i+1})^{p-1} \right
   ),
\end{align*}
   where we set (with empty sum being $0$)
   $a=\sum^{i-1}_{k=1}x_k$.

    It is easy to check that the function
    $S_r(x,y)=(x^r-y^r)/(x-y)$
     is an increasing (respectively, decreasing) function of $y>0$ for fixed $0<x \neq y$ when $r \geq 1$ (respectively, $0<r \leq 1$). Apply this with $r=p-1$, $x=a+x_i+x_{i+1}, y=a+x_i,
     y'=a+x_{i+1}$, we see immediately that $f_n({\bf x}) \geq f_n({\bf
     x}')$ when $x_{i+1} \geq x_i \geq 0$ and $1<p \leq 2$ or when $x_{i} \geq  x_{i+1} \geq 0$ and $p \geq 2$.

     It follows that when $p=2$ and $\lambda_k=1$ for all $k$, the
   maximum of $f_n$ on all non-negative sequences is the same as the maximum of $f_n$ on all non-negative,
non-increasing sequences. Thus, when $p=2,\lambda_k=1$ for all $k$, the assertion of Lemma
\ref{lem1} holds for all non-negative sequences.

     Remark 2.  We also remark that when $p>2$, we have for $n \geq 2$,
\begin{align}
\label{2.2'}
     \frac {\partial f_n}{\partial x_n}\left(\left(1, 1, \ldots, 1 \right)
     \right)=n^{p-2}\left ( np-C_{n,p,\lambda}\left(n+p-1 \right ) \right
     ) <0,
\end{align}
    where the last inequality is
    equivalent to
\begin{equation*}
    \sum^{n}_{k=1}k^{p-1} <  \frac {n^{p-1}}{p}(n+p-1), \ n \geq 2,
\end{equation*}
     which in turn can be easily established by induction.

     Inequality \eqref{2.2'} implies that in this case
    $0=f_n\left(\left(1, 1, \ldots, 1 \right)
     \right)<f_n\left(\left(1, 1, \ldots, 1-\epsilon \right)
     \right)$ for some $\epsilon>0$ small enough and this shows
     that inequality \eqref{2.1} does not hold for
     all non-negative, non-increasing sequences when $p>2$.

\section{Proof of Theorem \ref{thm1}}
\label{sec 3} \setcounter{equation}{0}
    We now proceed to the proof of Theorem \ref{thm1}. Our
    approach here follows that of Bennett and Grosse-Erdmann in their proof of \cite[Theorem
1]{BGE}. By considering the sequences $(1, \ldots , 1,$$ 0, 0, $$
\ldots )$,  we see first that \eqref{1.0'} is a necessary condition for
the validity of inequality \eqref{1.0} and that $U'_p \leq U_p$.
Conversely, assume that condition \eqref{1.0'} holds. Note first that it follows from Lemma \ref{lem0} and \ref{lem1} that $C_{n,p,\lambda} \leq p$ where $C_{n,p,\lambda}$ is defined as in Lemma \ref{lem1}. Further
note that for any integer $n \geq 1$,
\begin{align}
\label{2.3}
   \sum^n_{k=1} \lambda_k\Lambda^{p-1}_k\sum^{\infty}_{i=k}C_{i,p, \lambda}\frac {b_i}{\Lambda^p_i}
   & \leq \sum^n_{k=1} \lambda_k\Lambda^{p-1}_k\sum^{n}_{i=k}C_{i,p,
\lambda}\frac {b_i}{\Lambda^p_i}
   +\sum^n_{k=1} \lambda_k\Lambda^{p-1}_k\sum^{\infty}_{i=n}C_{i,p, \lambda}\frac {b_i}{\Lambda^p_i} \\
  & \leq \sum^{n}_{i=1}C_{i,p, \lambda}\frac {b_i}{\Lambda^p_i}\sum^i_{k=1}
  \lambda_k\Lambda^{p-1}_k+p\sum^n_{k=1} \lambda_k\Lambda^{p-1}_k\sum^{\infty}_{i=n}\frac
  {b_i}{\Lambda^p_i} \nonumber \\
  & \leq \sum^{n}_{i=1}C_{i,p, \lambda}\frac {b_i}{\Lambda^p_i}\sum^i_{k=1}
  \lambda_k\Lambda^{p-1}_k+pU'_p\frac {1}{\Lambda^p_n}\sum^n_{k=1}
  \lambda_k\Lambda^{p-1}_k\sum^{n}_{i=1}b_i \nonumber \\
  & \leq  U''_p\sum^n_{i=1}b_i, \nonumber
\end{align}
   where $U''_p=pU'_p+1, 1 \leq p \leq 2, U''_p=pU'_p+p, p>2$ and we have used \eqref{1.0'} in the third inequality above and the bound $\sum^n_{k=1}
  \lambda_k\Lambda^{p-1}_k \leq \sum^n_{k=1}
  \lambda_k\Lambda^{p-1}_n=\Lambda^p_n$ in the last inequality above.

 Now by Lemma \ref{lem1}, we have, for any non-negative, non-increasing sequences $(x_n)_{n \geq 1}$,
\begin{align*}
   \sum^{\infty}_{n=1}b_n \left (\sum^{n}_{k=1}\frac {\lambda_kx_k}{\Lambda_n} \right
   )^p & \leq \sum^{\infty}_{n=1}C_{n,p, \lambda}\frac {b_n}{\Lambda^p_n}\sum^{n}_{k=1}\lambda_kx_k\left (\sum^{k}_{i=1}\lambda_ix_i \right
   )^{p-1}  \\
   &=\sum^{\infty}_{k=1}\lambda_kx_k\left(  \sum^{\infty}_{n=k}C_{n,p, \lambda}\frac {b_n}{\Lambda^p_n} \right )\left (\sum^{k}_{i=1}\lambda_ix_i \right
   )^{p-1} \\
   &= \sum^{\infty}_{k=1}\left( \lambda_k\Lambda^{p-1}_k\sum^{\infty}_{n=k}C_{n,p, \lambda}\frac {b_n}{\Lambda^p_n} \right )x_k
   \left (\sum^{k}_{i=1}\frac {\lambda_ix_i}{\Lambda_k} \right
   )^{p-1}  \\
   & \leq U''_p\sum^{\infty}_{k=1}b_kx_k
   \left (\sum^{k}_{i=1}\frac {\lambda_ix_i}{\Lambda_k} \right
   )^{p-1}  \\
   &= U''_p\sum^{\infty}_{k=1}b^{1/p}_kx_kb^{1/q}_k
   \left (\sum^{k}_{i=1}\frac {\lambda_ix_i}{\Lambda_k} \right
   )^{p-1},
\end{align*}
    where the second inequality above follows from Lemma \ref{lem2} and \eqref{2.3}, the sequence
    $$\left(x_k
   \left (\sum^{k}_{i=1}\frac {\lambda_ix_i}{\Lambda_k} \right
   )^{p-1} \right)_{k \geq 1}$$ being non-negative, non-increasing.

     By H\"older's inequality, we then have
\begin{align*}
   \sum^{\infty}_{n=1}b_n \left (\sum^{n}_{k=1}\frac {\lambda_kx_k}{\Lambda_n} \right
   )^p & \leq  U''_p\left (\sum^{\infty}_{n=1}b_nx^p_n \right )^p\left
  (\sum^{\infty}_{k=1}b_k
   \left (\sum^{k}_{i=1}\frac {\lambda_ix_i}{\Lambda_k} \right
   )^p\right )^{1/q},
\end{align*}
    which implies \eqref{1.0} with $U_p$ being replaced by ${U''_p}^p$ and this completes the
  proof of Theorem \ref{thm1}.


\end{document}